\def\zhou#1 {\fbox {\footnote {\ }}\ \footnotetext { From Yue: {\color{red}#1}}}
\newtheorem{theorem}{Theorem}[section]
\newtheorem{remark}[theorem]{Remark}
\newtheorem{lemma}[theorem]{Lemma}
\newtheorem{corollary}[theorem]{Corollary}
\newtheorem{problem}{Problem}
\newcommand{\Gal}{\mathrm{Gal}}
\newcommand{\GL}{\mathrm{GL}}
\newcommand{\F}{\mathbb F}
\newcommand{\RN}[1]{%
	\textup{\uppercase\expandafter{\romannumeral#1}}%
}
\title{New lower bounds for partial $k$-parallelisms}
\author{Tao Zhang$^{\text{a}}$ and Yue Zhou$^{\text{b,}}$\thanks{Corresponding author (e-mail: yue.zhou.ovgu@gmail.com).} \\
\footnotesize $^{\text{a}}$ School of Mathematics and Information Science, Guangzhou University, Guangzhou 510006, China.\\
\footnotesize $^{\text{b}}$ Department of Mathematics, National University of Defense Technology, Changsha 410073, China.}
\begin{document}
\date{}\maketitle
\begin{abstract}
Due to the applications in network coding, subspace codes and designs have received many attentions. Suppose that $k\mid n$ and $V(n,q)$ is an $n$-dimensional space over the finite field $\mathbb{F}_{q}$. A $k$-spread is a $\frac{q^n-1}{q^k-1}$-set of $k$-dimensional subspaces of $V(n,q)$ such that each nonzero vector is covered exactly once. A partial $k$-parallelism in $V(n,q)$ is a set of pairwise disjoint $k$-spreads. As the number of $k$-dimensional subspaces in $V(n,q)$ is ${n \brack k}_{q}$, there are at most ${n-1 \brack k-1}_{q}$ spreads in a partial $k$-parallelism.

By studying the independence numbers of Cayley graphs associated to a special type of partial $k$-parallelisms in $V(n,q)$, we obtain new lower bounds for partial $k$-parallelisms. In particular, we show that there exist at least $\frac{q^{k}-1}{q^{n}-1}{n-1 \brack k-1}_q$ pairwise disjoint $k$-spreads in $V(n,q)$.

\medskip
\noindent {{\it Keywords\/}: Parallelism, spread, subspace, independent set.
}\\
\smallskip

\noindent {{\it Mathematics Subject Classification\/}: 51A40, 51A15.}
\end{abstract}

\section{Introduction}
In 2008, K\"otter and Kschischang \cite{KK08} found an important application of subspace codes in random network coding. After their work, the theory of subspace codes and designs ($q$-analogs of codes and designs) has developed rapidly, see \cite{BPV13,ES09,EV11,SE11,SE2011,SK09,SKK08,S10} and the references therein. There are a lot of $q$-analogs of combinatorial structures been studied in recent years. For example, constant dimension codes \cite{ES09}, $q$-Steiner systems \cite{BEOVW16}, $t$-designs over finite fields \cite{FLV14}, large sets of subspace designs \cite{BKKL17}. For a survey on subspace codes and designs, we refer the reader to \cite{ES16}.

Let $n$ and $k$ be positive integers with $k<n$, and let $q$ be a prime power. Let $V(n,q)$ denote an $n$-dimensional space over the finite field $\mathbb{F}_{q}$ and $\mathcal{G}_{q}(n,k)$ denote the set of all $k$-dimensional subspaces of $V(n,q)$. It is well known that
\begin{align*}
|\mathcal{G}_{q}(n,k)|={n \brack k}_q:=\frac{(q^{n}-1)(q^{n-1}-1)\cdots(q^{n-k+1}-1)}{(q^{k}-1)(q^{k-1}-1)\cdots(q-1)},
\end{align*}
 where ${n \brack k}_{q}$ is the $q$-ary Gaussian coefficient.

A \emph{partial $k$-spread} $S$ of $V(n,q)$ is a collection of $k$-dimensional subspaces of $V(n,q)$ such that nonzero vectors are covered at most once. It is easy to see that a partial $k$-spread of $V(n,q)$ is a constant dimension code in $\mathcal{G}_{q}(n,k)$ with minimum distance $k$. Given positive integers $n$ and $k$ with $k<n$, it is natural to ask the problem of finding the maximum size of a partial $k$-spread of $V(n,q)$. This problem has been extensively studied, for a recent results on partial $k$-spread, see \cite{NS17}. If $S$ contains all the nonzero vectors of $V(n,q)$, then it is called a \emph{$k$-spread}. In 1954, Andr\'e \cite{A54} proved that a $k$-spread of $V(n,q)$ exists if and only if $k$ divides $n$. The size of a $k$-spread in $V(n,q)$ is $\frac{q^{n}-1}{q^{k}-1}$.

Assume that $k$ divides $n$. A \emph{partial $k$-parallelism} $P$ of $V(n,q)$ is a collection of $k$-spreads of $V(n,q)$ that are pairwise disjoint. If $P$ contains all the $k$-dimensional subspaces of $V(n,q)$, then it is called a $k$-parallelism. Let $P(n,k,q)$ denote the maximum size of a partial $k$-parallelism of $V(n,q)$. Note that the size of a $k$-spread in $V(n,q)$ is $\frac{q^{n}-1}{q^{k}-1}$ and the number of $k$-dimensional subspaces in $V(n,q)$ is ${n \brack k}_{q}$. Hence $P(n,k,q)\le {n-1 \brack k-1}_{q}$, and $P(n,k,q)={n-1 \brack k-1}_{q}$
if and only if there exists a $k$-parallelism in $V(n,q)$.

Some $2$-parallelisms of $V(n,q)$ are known for many years. For $n$ even, a $2$-parallelism in $V(n,2)$ was found in the context of Preparata codes \cite{B76,BVW83}. In \cite{B74} Beutelspacher showed that there exists a 2-parallelism in $V(2^{i},q)$ for $i\ge2$ and $q$ prime power. Recently, two more sporadic examples of $k$-parallelisms were found: one $2$-parallelism in $V(6,3)$ \cite{EV12} and one $3$-parallelism in $V(6,2)$ \cite{S02}.

As there are only a few constructions of $k$-parallelisms, it is naturally to state the following problem.
\begin{problem}
Given positive integers $n,k$ with $k|n$ and a prime power $q$, find the largest possible size $P(n,k,q)$ of partial $k$-parallelism of $V(n,q)$.
\end{problem}

In \cite{B90}, Beutelspacher proved that $P(n,2,q)\ge q^{2\lfloor\text{log}(n-1)\rfloor}+\cdots+q+1$ for $n$ even and $q$ a prime power. Recently, Etzion \cite{E15} showed that $P(n,k,2)\ge2^{k}-1$ for $k|n$ and $n>k$, and $P(n,k,q)\ge2$ for $k|n$, $n>k$ and $q$ a prime power. In the same paper it was proven that $P((m+1)k,k,q)\ge P(mk,k,q)$ for $m\ge 2$ by a recursive construction. Below we improve the previous results by showing that $P(n,k,q)\ge\frac{q^{k}-1}{q^{n}-1}{n-1 \brack k-1}_{q}$.

The rest of the paper are organized as follows. In Section \ref{preliminaries}, we introduce some basics of graphs and general linear groups. In Section \ref{partial k-parallelism}, we give a new lower bound for $P(n,k,q)$. As a consequence, we can show that there exist at least $\frac{q^{k}-1}{q^{n}-1}{n-1 \brack k-1}_{q}$ pairwise disjoint $k$-spreads in $V(n,q)$.

\section{Preliminaries}\label{preliminaries}
\subsection{Graph theory}
A graph $G$ consists of a set of vertices $V(G)$ and a set of edges $E(G)$. Two vertices $u$ and $v$ are called adjacent if $\{u,v\}\in E(G)$.

Let $H$ be a group and $S$ be a subset of $H$ such that $S^{-1}=S$ and $e\not\in S$. Here $S^{-1}=\{s^{-1}: s\in S\}$. The \emph{Cayley graph} $\Gamma(H,S)$ is a graph with vertex set $H$ in which two distinct vertices $g,h$ are adjacent if and only if $g^{-1}h\in S$. Here $S$ is called the \emph{generating set}.

An \emph{independent set} of a graph $G$ is a subset of $V(G)$ such that every pair of vertices are not adjacent. The \emph{independence number} $\alpha(G)$ of a graph $G$ is the cardinality of the largest independent set. Formally,
 \begin{align*}
 \alpha(G)=\text{max}\{|U|: U\subseteq V(G),U\text{ is an independent set}\}.
 \end{align*}

 The independence number is an important parameter in graph theory and has been studied for a long time. It is also related to some other parameters such as chromatic number, clique number, and so on. We will use the following result on the independence number proved by Caro and Wei \cite{AS16}:
  
Let $d_{v}$ denote the degree of a vertex $v$.
\begin{lemma}\rm{\cite{AS16}}\label{lemma1}
$\alpha(G)\ge\sum_{v\in V}\frac{1}{d_{v}+1}.$
\end{lemma}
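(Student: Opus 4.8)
The plan is to prove this via the probabilistic method using a uniformly random ordering of the vertices; this is the classical Caro--Wei argument, and it produces an independent set whose expected size is exactly the right-hand side. First I would fix a uniformly random total order (permutation) $\pi$ on the vertex set $V=V(G)$, and define
$$I_\pi=\{v\in V: v \text{ precedes every neighbour of } v \text{ in } \pi\}.$$
The first step is to check that $I_\pi$ is always an independent set: if two adjacent vertices $u$ and $v$ both belonged to $I_\pi$, then $u$ would precede $v$ (since $v$ is a neighbour of $u$) and simultaneously $v$ would precede $u$, which is impossible. Hence $|I_\pi|\le\alpha(G)$ for every realisation of $\pi$.

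Next I would compute the expectation $\mathbb{E}[|I_\pi|]$ by linearity. Writing $\mathbf{1}[v\in I_\pi]$ for the indicator of the event that $v$ lies in $I_\pi$, we have $|I_\pi|=\sum_{v\in V}\mathbf{1}[v\in I_\pi]$, so $\mathbb{E}[|I_\pi|]=\sum_{v\in V}\Pr[v\in I_\pi]$. For a fixed vertex $v$, the event $v\in I_\pi$ is precisely the event that $v$ is the $\pi$-minimum of the set $\{v\}\cup N(v)$, where $N(v)$ denotes the neighbourhood of $v$. Since $\pi$ restricted to this $(d_v+1)$-element set induces a uniformly random order on it, each of its elements is equally likely to come first, so $\Pr[v\in I_\pi]=\frac{1}{d_v+1}$. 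Therefore $\mathbb{E}[|I_\pi|]=\sum_{v\in V}\frac{1}{d_v+1}$.

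Finally, since $|I_\pi|\le\alpha(G)$ holds for every realisation and a random variable must attain some value at least its expectation, there exists an ordering $\pi$ for which $|I_\pi|\ge\mathbb{E}[|I_\pi|]$; this ordering yields an independent set of the desired size, giving $\alpha(G)\ge\sum_{v\in V}\frac{1}{d_v+1}$. I do not anticipate a genuine obstacle here, as the argument is short; the only point requiring care is the probability computation, where one must justify that conditioning on the relative $\pi$-order of $\{v\}\cup N(v)$ makes each element equally likely to be first, which follows from the symmetry of the uniform distribution on permutations under relabelling. An alternative, non-probabilistic route would be a greedy induction on $|V(G)|$ that repeatedly removes a minimum-degree vertex together with its neighbours and recurses, but the random-ordering proof is cleaner and matches the stated bound directly.
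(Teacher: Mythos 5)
Your proof is correct: the paper gives no proof of this lemma at all, citing it directly to Alon--Spencer \cite{AS16}, and your random-permutation argument is precisely the standard Caro--Wei proof appearing in that reference. All steps check out --- the independence of $I_\pi$, the identification of the event $v\in I_\pi$ with $v$ being $\pi$-minimal in $\{v\}\cup N(v)$, and the expectation computation --- so nothing further is needed.
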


By Lemma \ref{lemma1}, for a Cayley graph $\Gamma$ with vertex set $H$ and generating set $S$, we have $\alpha(\Gamma)\ge\frac{|H|}{|S|+1}$.

\subsection{General linear group}
The general linear group $\GL(n,q)$ is the group of non-singular linear transformations of $V(n,q)$. It is isomorphic to the multiplicative group of $n\times n$ non-singular matrices whose entries come from $\mathbb{F}_{q}$. It is well known that
\begin{align*}
|\GL(n,q)|=(q^{n}-1)(q^{n}-q)\dots(q^{n}-q^{n-1}).
\end{align*}

\begin{lemma}
For any $M\in\GL(n,q)$ and any $k$-dimensional subspace $U$ of $V(n,q)$, $\text{dim}(M(U))=k$.
\end{lemma}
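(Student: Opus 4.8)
The plan is to exploit the fact that $M$ is non-singular, hence injective, so that it cannot collapse any nonzero vector to zero and therefore must preserve linear independence. First I would observe that $M(U) = \{M(u) : u \in U\}$ is itself a subspace of $V(n,q)$: this is immediate from the linearity of $M$, since $M(au + bv) = aM(u) + bM(v)$ for all $a, b \in \F_q$ and $u, v \in U$. So the only content of the statement is that the dimension is preserved.

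To pin down the dimension, I would fix a basis $\{u_1, \dots, u_k\}$ of $U$. Then $M(U)$ is spanned by $\{M(u_1), \dots, M(u_k)\}$, so $\dim M(U) \le k$ automatically. The reverse inequality---equivalently, that these $k$ images are linearly independent---is where non-singularity enters. Suppose $\sum_{i=1}^k c_i M(u_i) = 0$ for some $c_i \in \F_q$. By linearity this reads $M\left(\sum_{i=1}^k c_i u_i\right) = 0$, and since $M$ is non-singular its kernel is trivial, forcing $\sum_{i=1}^k c_i u_i = 0$. As $\{u_i\}$ is a basis of $U$ it is linearly independent, so every $c_i = 0$. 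Hence $\{M(u_i)\}$ is a linearly independent set of size $k$ inside $M(U)$, giving $\dim M(U) \ge k$ and therefore $\dim M(U) = k$.

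There is essentially no hard step here: the statement is the elementary fact that an invertible linear map restricts to an isomorphism on any subspace. If one prefers a more structural phrasing, the same conclusion follows at once from the rank--nullity theorem applied to the restriction $M|_U : U \to V(n,q)$, whose kernel is contained in $\ker M = \{0\}$, so that $\dim M(U) = \dim U - \dim \ker(M|_U) = k$. The only point worth flagging is that one must genuinely invoke non-singularity; without it the map could decrease the dimension.
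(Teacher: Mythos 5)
Your proof is correct. The paper actually states this lemma without any proof at all, treating it as a standard linear algebra fact, so there is nothing to compare against; your argument (image of a basis of $U$ under $M$ remains linearly independent because $\ker M = \{0\}$, hence $\dim M(U) = k$) is exactly the standard justification one would supply, and the rank--nullity phrasing you mention is an equally valid one-line alternative.
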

\begin{lemma}\label{lemma3}
Let $k,n$ be integers such that $k|n$, and $M\in \GL(n,q)$. If $\mathcal{S}$ is a $k$-spread of $V(n,q)$, then $\{M(U): U\in\mathcal{S}\}$ is also a $k$-spread of $V(n,q)$.
\end{lemma}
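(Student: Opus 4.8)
The plan is to unwind the definition of a $k$-spread into conditions that can be checked one at a time, and then verify that each is preserved under the invertible linear map $M$. Recall that $\mathcal{S}$ being a $k$-spread means that every member is a $k$-dimensional subspace and that each nonzero vector of $V(n,q)$ is covered exactly once, i.e. lies in exactly one member of $\mathcal{S}$; equivalently, the members pairwise meet only in $\{0\}$, their union is all of $V(n,q)$, and $|\mathcal{S}| = \frac{q^n-1}{q^k-1}$. I would show that the image family $\mathcal{S}' := \{M(U) : U \in \mathcal{S}\}$ inherits exactly these properties. The dimension condition is immediate: the preceding lemma already guarantees that $\dim(M(U)) = k$ for every $U \in \mathcal{S}$, so every member of $\mathcal{S}'$ is a $k$-dimensional subspace.

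The heart of the argument is the exact-covering condition, and the key observation is that $M$, being non-singular, is a bijection of $V(n,q)$ onto itself that fixes $0$, hence restricts to a bijection on the set of nonzero vectors. Given any nonzero $v \in V(n,q)$, set $w := M^{-1}(v)$, which is again nonzero. Since $\mathcal{S}$ is a spread, $w$ lies in exactly one subspace $U \in \mathcal{S}$; applying $M$ gives $v = M(w) \in M(U)$, so $v$ is covered by at least one member of $\mathcal{S}'$. For uniqueness, if $v \in M(U')$ for some $U' \in \mathcal{S}$, then $w = M^{-1}(v) \in U'$, and the fact that $w$ lies in a \emph{unique} member of $\mathcal{S}$ forces $U' = U$; thus $v$ is covered exactly once.

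Finally, for the cardinality I would note that the injectivity of $M$ makes the assignment $U \mapsto M(U)$ injective (if $M(U) = M(U')$ then $U = M^{-1}(M(U)) = M^{-1}(M(U')) = U'$), so $|\mathcal{S}'| = |\mathcal{S}| = \frac{q^n-1}{q^k-1}$; alternatively this count is already forced by the exact-covering property established above. I do not anticipate a genuine obstacle here, since the statement is a routine transport of structure along a bijection. The only point demanding a little care is to argue exactly-once coverage rather than merely at-least-once coverage, and this is precisely the step where the invertibility of $M$ — and not just its linearity — is essential.
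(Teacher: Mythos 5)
Your proof is correct, and it rests on the same key mechanism as the paper's: pulling vectors back through $M^{-1}$ and using the spread property of $\mathcal{S}$. The difference is in which formulation of the spread condition gets verified. The paper's proof is a three-line argument that only checks pairwise disjointness: if $v \in M(U)\cap M(V)$ for distinct $U,V\in\mathcal{S}$, then $M^{-1}(v)\in U\cap V = \{0\}$, so $v=0$; it then concludes immediately that the image family is a spread, leaving implicit the fact that the images still cover every nonzero vector (which needs either the surjectivity of $M$ or a counting argument: $\frac{q^n-1}{q^k-1}$ pairwise disjoint $k$-subspaces necessarily exhaust all nonzero vectors). You instead verify the exact-covering condition head-on — existence of a covering member via $w = M^{-1}(v)$, uniqueness via the same pull-back — which subsumes both disjointness and coverage in one pass. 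Your version is therefore slightly longer but fully self-contained, whereas the paper's is terser at the cost of an unstated (though easily filled) final step; your closing remark that invertibility, not mere linearity, is what drives the uniqueness step is exactly the right thing to emphasize.
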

\begin{proof}
Suppose $v\in M(U)\cap M(V)$, where $U,V\in\mathcal{S}$. Since $M$ is non-singular, then $M^{-1}(v)\in U\cap V$. Hence $M^{-1}(v)=0$, and then $v=0$. Therefore $\{M(U): U\in\mathcal{S}\}$ is a $k$-spread of $V(n,q)$.
\end{proof}

\begin{lemma}\label{lemma2}
For any $M\in\GL(n,q)$ and any two $k$-dimensional subspaces $U,V$ of $V(n,q)$,
\begin{align*}
|\{M: M\in \GL(n,q), M(U)=V\}|=q^{k(n-k)}|\GL(k,q)||\GL(n-k,q)|.
\end{align*}
\end{lemma}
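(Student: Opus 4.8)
The plan is to count the invertible maps $M$ sending $U$ onto $V$ directly, by prescribing $M$ on a carefully chosen basis. First I would fix an ordered basis $u_1,\dots,u_k$ of $U$ and extend it to an ordered basis $u_1,\dots,u_n$ of $V(n,q)$. Since a linear map is determined by its values on a basis, each $M\in\GL(n,q)$ corresponds bijectively to the tuple $(M(u_1),\dots,M(u_n))$, which must itself be an ordered basis of $V(n,q)$.

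Next I would translate the condition $M(U)=V$. Because $M$ is nonsingular, hence injective, one has $\dim M(U)=k=\dim V$, so $M(U)=V$ is equivalent to $M(U)\subseteq V$, that is, to $M(u_1),\dots,M(u_k)\in V$. As these images are part of a basis they are linearly independent, so this condition says exactly that $M(u_1),\dots,M(u_k)$ is an ordered basis of the $k$-dimensional space $V$. The count therefore splits into two independent stages: choosing the images of $u_1,\dots,u_k$, and then choosing the images of $u_{k+1},\dots,u_n$.

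For the first stage, the number of ordered bases of a $k$-dimensional space over $\F_q$ is $(q^k-1)(q^k-q)\cdots(q^k-q^{k-1})=|\GL(k,q)|$. For the second stage, each successive image $M(u_{k+i})$ must be chosen outside the span of the $k+i-1$ previously chosen images; since the first $k$ of those already span $V$, the number of admissible choices for $M(u_{k+1}),\dots,M(u_n)$ is $\prod_{i=k}^{n-1}(q^n-q^i)$. Multiplying the two stages gives $|\{M : M(U)=V\}|=|\GL(k,q)|\cdot\prod_{i=k}^{n-1}(q^n-q^i)$.

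The only remaining work is to identify $\prod_{i=k}^{n-1}(q^n-q^i)$ with $q^{k(n-k)}|\GL(n-k,q)|$, and I expect this algebraic bookkeeping to be the one place requiring care, though it is routine rather than conceptual. Factoring $q^n-q^i=q^i(q^{n-i}-1)$ and re-indexing by $j=n-i$ yields $q^{\sum_{i=k}^{n-1} i}\prod_{j=1}^{n-k}(q^j-1)$, whose $q$-exponent $\frac{(n-k)(n+k-1)}{2}$ matches $k(n-k)+\binom{n-k}{2}$, the exponent in $q^{k(n-k)}|\GL(n-k,q)|$, while the factors $\prod_{j=1}^{n-k}(q^j-1)$ coincide as well. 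As a consistency check, one can instead regard $\{M:M(U)=V\}$ as a left coset of the stabilizer of $U$ in $\GL(n,q)$ (the action on $k$-dimensional subspaces being transitive) and compute that stabilizer from the block upper-triangular form $\left(\begin{smallmatrix} A & B \\ 0 & D \end{smallmatrix}\right)$ with $A\in\GL(k,q)$, $D\in\GL(n-k,q)$, and $B$ an arbitrary $k\times(n-k)$ matrix, which reproduces the same value $q^{k(n-k)}|\GL(k,q)||\GL(n-k,q)|$.
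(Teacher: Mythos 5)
Your proof is correct, but it takes a genuinely different route from the paper's. The paper first reduces the count to that of the stabilizer $S_{0}=\{M\in \GL(n,q): M(U)=U\}$, by fixing one $M_{0}$ with $M_{0}(U)=V$ and noting that $M\mapsto M_{0}^{-1}M$ is a bijection from $\{M: M(U)=V\}$ onto $S_{0}$ (a coset argument); it then identifies $S_{0}$, in a basis adapted to $U$, with the block upper-triangular matrices $\left(\begin{smallmatrix} A & C \\ 0 & B \end{smallmatrix}\right)$ with $A\in\GL(k,q)$, $B\in\GL(n-k,q)$ and $C$ arbitrary, from which the formula is read off with no further computation. You instead count the maps sending $U$ onto $V$ directly, prescribing images of an adapted basis one vector at a time: an ordered basis of $V$ first ($|\GL(k,q)|$ choices), then successive vectors outside the span of those already chosen ($\prod_{i=k}^{n-1}(q^{n}-q^{i})$ choices), and you then carry out the algebraic identification of this product with $q^{k(n-k)}|\GL(n-k,q)|$ --- your exponent bookkeeping $\frac{(n-k)(n+k-1)}{2}=k(n-k)+\binom{n-k}{2}$ is correct. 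Your approach is more elementary (no group-theoretic reduction) and makes the independence of the answer from $U$ and $V$ visible from the start, whereas the paper's approach trades that final algebra for structure: the three factors $q^{k(n-k)}$, $|\GL(k,q)|$, $|\GL(n-k,q)|$ appear with meaning as the blocks $C$, $A$, $B$ of the stabilizer. Note that your closing ``consistency check'' is in fact exactly the paper's proof.
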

\begin{proof}
Let $S_{0}=\{M: M\in \GL(n,q), M(U)=U\}$ and $S_{1}=\{M: M\in \GL(n,q), M(U)=V\}$. Fix a matrix $M_{0}\in S_{1}$. Then for any $M\in S_{1}$, we have $M_{0}^{-1}M\in S_{0}$. On the other hand, for any $N\in S_{0}$, we have $M_{0}N\in S_{1}$. Hence $|S_{0}|=|S_{1}|$.

Any basis $u_{1},u_{2},\dots,u_{k}$ of $U$ can be extended to a basis over $\mathbb{F}_{q}$. Let $u_{1},u_{2},\cdots,u_{k},v_{1},\cdots,v_{n-k}$ be a basis of $V(n,q)$ over $\mathbb{F}_{q}$. For any $M\in \GL(n,q)$, $M(U)=U$ if and only if
\begin{align*}
M\left(
   \begin{array}{c}
     a_{1} \\
     \vdots \\
     a_{k} \\
     0 \\
     \vdots \\
     0 \\
   \end{array}
 \right)=\left(
   \begin{array}{c}
     b_{1} \\
     \vdots \\
     b_{k} \\
     0 \\
     \vdots \\
     0 \\
   \end{array}
 \right)
\end{align*}
for some $a_{i},b_{i}\in\mathbb{F}_{q}$, $i=1,\cdots,k$. Thus $M$ must be of the form
\begin{align*}
\left(
  \begin{array}{cc}
    A & C \\
    0 & B \\
  \end{array}
\right),
\end{align*}
where $A\in \GL(k,q)$, $B\in \GL(n-k,q)$ and $C$ is any $k\times(n-k)$ matrix. Hence $|\{M: M\in \GL(n,q), M(U)=V\}|=|S_{0}|=q^{k(n-k)}|\GL(k,q)||\GL(n-k,q)|$.
\end{proof}

\section{A lower bound for partial $k$-parallelisms}\label{partial k-parallelism}
In this section, we give a new lower bound for partial $k$-parallelisms. Let $k,n$ be positive integers such that $k|n$. Let $q$ be a prime power, $N=\frac{q^{n}-1}{q^{k}-1}$ and let $\omega$ be a primitive element of $\mathbb{F}_{q^{n}}$. Let $V_{i}=\omega^{i}\mathbb{F}_{q^{k}}, i=0,1,\dots,N-1$. Then $\{V_{i}: i=0,1,\cdots,N-1\}$ forms a $k$-spread.

Let
\begin{align}\label{eq3}
S_{ij}=\{M\in \GL(n,q): M(V_{i})=V_{j}\}.
\end{align}
%
\begin{theorem}\label{mainthm}
Let $q$ be a prime power and $k,n$ be positive integers such that $k|n$. Then
\begin{align*}
	P(n,k,q)\ge\frac{|\GL(n,q)|}{|\cup_{i,j=0}^{N-1}S_{ij}|},
\end{align*}
where $S_{ij}$ is defined by Equation~(\ref{eq3}).
\end{theorem}
\begin{proof}
	Note that $\{V_{i}: i=0,1,\cdots,N-1\}$ forms a $k$-spread. By Lemma~\ref{lemma3}, for any $M\in \GL(n,q)$, $\{M(V_{i}): i=0,1,\cdots,N-1\}$ also forms a $k$-spread.

	Now we define a graph $G$ with vertex set $V(G)=\GL(n,q)$ in which two vertices $M_{1},M_{2}\in \GL(n,q)$ are adjacent if and only if $M_{1}^{-1}M_{2}\in S$, where
	$$S=\{M: M\in \GL(n,q), M\ne I, M(V_{i})=V_{j}\text{ for some }0\le i,j\le N-1\}.$$
	
	Note that if $M(V_{i})=V_{j}\text{ for some }0\le i,j\le N-1$, then $M^{-1}(V_{j})=V_{i}$ and so $M^{-1}\in S$. Hence $S=S^{-1}$. Thus the graph $G$ is a Cayley graph. The degree of graph $G$ is $|S|$.

	If $\mathcal{I}$ is an independent set of the graph $G$, we claim that $\{M(V_{i}): i=0,1,\cdots,N-1\}$, $M\in\mathcal{I}$ form a partial $k$-parallelism. Otherwise, there exist $M_{1}\ne M_{2}\in\mathcal{I}$ and $0\le i,j\le N-1$ such that $M_{1}(V_{i})=M_{2}(V_{j})$. Then $M_{2}^{-1}M_{1}(V_{i})=V_{j}$, so $M_{2}^{-1}M_{1}\in S$. Hence $M_{1}$ and $M_{2}$ are adjacent, which contradicts the fact that $\mathcal{I}$ is an independent set of graph $G$. Therefore $\{M(V_{i}): i=0,1,\cdots,N-1\}$, $M\in\mathcal{I}$ form a partial $k$-parallelism. So we have $P(n,k,q)\ge\alpha(G)$.

As $|S|=|\cup_{i,j=0}^{N-1}S_{i,j}|-1$ we deduce from Lemma 2.1 that
	\[P(n,k,q)\ge\alpha(G)\ge\frac{|V(G)|}{|S|+1}\ge\frac{|\GL(n,q)|}{|\cup_{i,j=0}^{N-1}S_{ij}|}. \qedhere \]
\end{proof}

To get the precise lower bound in Theorem~\ref{mainthm}, one needs to compute the size of $\cup_{i,j=1}^{N}S_{ij}$. Next let us look at the case $n=2k$.
\begin{theorem}\label{thm2}
Let $q$ be a prime power and $k$ be a positive integer, then
\begin{align*}
P(2k,k,q)\ge\frac{|\GL(2k,q)|}{L},
 \end{align*}
 where $L=N^{2}q^{k^{2}}|\GL(k,q)|^{2}-\frac{1}{2}(N(N-1))^{2}|\GL(k,q)|^{2}+\frac{1}{3!}(N(N-1)(N-2))^{2}|\GL(k,q)|+\sum_{i=4}^{N}(-1)^{i+1} \frac{1}{i!}(N(N-1)(N-2))^{2}\sum_{l|k}\left(\left|\GL(\frac{k}{l},q^{l})\right|-\left| \bigcup_{l\mid m, l\neq m} \GL(\frac{k}{m}, q^m)\rtimes \mathrm{Gal}(\F_{q^m}/\F_q)\right|\right)l[q^{l}-2]_{i-3}$ and $[n]_{j}=n(n-1)\cdots(n-j+1)$.
\end{theorem}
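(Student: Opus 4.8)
The plan is to compute $L=\bigl|\bigcup_{i,j=0}^{N-1}S_{ij}\bigr|$ in the case $n=2k$ (so that $N=q^{k}+1$) by inclusion--exclusion and then quote Theorem~\ref{mainthm}. The first observation is that a matrix lies in several of the $S_{ij}$ at once precisely when it realizes a \emph{partial injection} of the index set into itself: if $M(V_{i_s})=V_{j_s}$ for $s=1,\dots,r$, then bijectivity of $M$ forces the $i_s$ to be pairwise distinct and the $j_s$ to be pairwise distinct, while a repeated source or target makes the intersection empty. Hence
\[
  L=\sum_{r\ge 1}(-1)^{r+1}\sum_{T}\Bigl|\bigcap_{(i,j)\in T}S_{ij}\Bigr|,
\]
where the inner sum runs over partial injections $T$ of size $r$, and the task reduces to evaluating each common intersection size. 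The number of partial injections of size $r$ is $\tfrac{1}{r!}\bigl(N(N-1)\cdots(N-r+1)\bigr)^{2}$, which will supply the combinatorial prefactors.

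The three leading terms come from small $r$. For $r=1$, Lemma~\ref{lemma2} with $n-k=k$ gives $|S_{ij}|=q^{k^{2}}|\GL(k,q)|^{2}$, and there are $N^{2}$ singletons. For $r=2$, any two distinct spread elements are complementary, $V_{i_1}\oplus V_{i_2}=V(2k,q)$, so $M$ splits as an independent pair of isomorphisms $V_{i_1}\to V_{j_1}$ and $V_{i_2}\to V_{j_2}$, whence the count is $|\GL(k,q)|^{2}$. For $r=3$, realizing the third space as the graph of an isomorphism relative to the first two converts the third constraint into a conjugacy relation between the two diagonal blocks, cutting the count to $|\GL(k,q)|$. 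With the signs $+,-,+$ and the prefactors above, these reproduce the first three displayed terms exactly.

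The substance is the tail $r\ge 4$. Here I exploit that $\{V_i\}=\{\omega^{i}\F_{q^{k}}\}$ is the Desarguesian spread, namely the set of $1$-dimensional $\F_{q^{k}}$-subspaces of $\F_{q^{2k}}=\F_{q^{k}}^{2}$, so that $\GL(2,q^{k})\le\GL(2k,q)$ permutes the spread as $\mathrm{PGL}(2,q^{k})$ acting \emph{sharply $3$-transitively} on $\mathrm{PG}(1,q^{k})$. Pre- and post-composing $M$ with elements of $\GL(2,q^{k})$, I normalize the first three source/target pairs to the standard frame $(0,\infty,1)\mapsto(0,\infty,1)$; summing over \emph{ordered} partial injections, this factors the contribution as $\bigl(N(N-1)(N-2)\bigr)^{2}\,D_{r-3}$, with the remaining pairs now ranging over slopes in $\F_{q^{k}}\setminus\{0,1\}$. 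A matrix fixing $V_{0},V_{\infty},V_{1}$ is forced into the form $A\oplus A$ with $A\in\GL_{\F_q}(\F_{q^{k}})\cong\GL(k,q)$, and the constraint $M(V_{m})=V_{n}$ becomes the conjugacy relation $A\mu_{m}A^{-1}=\mu_{n}$ between multiplication operators. Swapping the order of summation, the weight attached to $r=3+t$ collapses to $D_{t}=\sum_{A}\bigl[\,|\mathcal M(A)|-2\,\bigr]_{t}$, where $\mathcal M(A)=\{m\in\F_{q^{k}}:A\mu_{m}A^{-1}\in\F_{q^{k}}\}$ and $[x]_{t}=x(x-1)\cdots(x-t+1)$.

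The structural analysis of $\mathcal M(A)$ is the step I expect to be the main obstacle. I would first prove that $\mathcal M(A)$ is always a subfield $\F_{q^{d(A)}}$ of $\F_{q^{k}}$ (closure under sum, product and inverse follows immediately from $A\mu_{a}A^{-1}=\mu_{a'}$ and $A\mu_{b}A^{-1}=\mu_{b'}$), so that $[\,|\mathcal M(A)|-2\,]_{t}=[q^{d(A)}-2]_{t}$. Next, for each $l\mid k$ the condition $\F_{q^{l}}\subseteq\mathcal M(A)$ says exactly that $A$ normalizes the algebra of $\F_{q^{l}}$-scalars, i.e.\ that $A$ is $\F_{q^{l}}$-semilinear; the set of such $A$ is the semilinear group $\GL(\tfrac{k}{l},q^{l})\rtimes\Gal(\F_{q^{m}}/\F_q)$ with $m=l$, of order $l\,|\GL(\tfrac{k}{l},q^{l})|$. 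Finally I stratify $\GL(k,q)$ by the value $d(A)=l$ via inclusion--exclusion over the divisor lattice of $k$: the matrices with $d(A)=l$ are obtained by removing from the $\F_{q^{l}}$-semilinear group the union of the larger semilinear groups $\GL(\tfrac{k}{m},q^{m})\rtimes\Gal(\F_{q^{m}}/\F_q)$ over proper multiples $l\mid m\mid k$, $m\ne l$. Assembling $D_{t}=\sum_{l\mid k}(\text{stratum size})\,[q^{l}-2]_{t}$, substituting $t=i-3$, and restoring the prefactor $(-1)^{i+1}\tfrac{1}{i!}\bigl(N(N-1)(N-2)\bigr)^{2}$ (the $\tfrac{1}{i!}$ passing from ordered to unordered partial injections) yields the summand of Theorem~\ref{thm2}. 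The genuinely delicate points are precisely this subfield stratification --- the subtracted semilinear groups overlap, so their union must itself be controlled --- together with the careful bookkeeping of ordered versus unordered configurations that produces the $\tfrac{1}{i!}$ and $\bigl(N(N-1)(N-2)\bigr)^{2}$ factors; verifying the exact constant on each stratum is where one must be most careful.
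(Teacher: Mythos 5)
Your proposal is, in substance, the same proof as the paper's: the same inclusion--exclusion over configurations with pairwise distinct sources and pairwise distinct targets, the same values $q^{k^2}|\GL(k,q)|^2$, $|\GL(k,q)|^2$ and $|\GL(k,q)|$ for the terms $r=1,2,3$, the same normalization of the first three pairs (the paper's ``by change of basis \dots\ $\omega^{i_2}=\omega^{j_2}=1+\omega$'' is exactly your sharp $3$-transitivity of $\mathrm{PGL}(2,q^k)$ on the Desarguesian spread), the same observation that a matrix fixing three spread members has the form $A\oplus A$, and the same translation of each further constraint into ``$A$ conjugates a multiplication operator to a multiplication operator''. Your subfield $\mathcal M(A)$ and the identification of $\{A:\F_{q^l}\subseteq\mathcal M(A)\}$ with the semilinear group is a cleaner rendering of the step the paper dispatches with ``by looking at the $q$-polynomial associated with $\varphi_0$, it is not difficult to see\dots''.

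The gap is in your very last step, at exactly the spot you yourself flagged as delicate. Your stratification gives, for the stratum $\{A: d(A)=l\}$, the count
\begin{align*}
l\,\left|\GL(k/l,q^l)\right| \;-\; \left|\bigcup_{l\mid m,\; m\neq l}\GL(k/m,q^m)\rtimes\Gal(\F_{q^m}/\F_q)\right|,
\end{align*}
since the union of the larger semilinear groups sits inside the $\F_{q^l}$-semilinear group of order $l\,|\GL(k/l,q^l)|$. The summand in the stated $L$, however, is
\begin{align*}
\left(\left|\GL(k/l,q^l)\right| \;-\; \left|\bigcup_{l\mid m,\; m\neq l}\GL(k/m,q^m)\rtimes\Gal(\F_{q^m}/\F_q)\right|\right) l,
\end{align*}
which differs from yours by $(l-1)\left|\bigcup_{m}\cdots\right|$; the two agree only for $l=1$ and $l=k$, hence they disagree for every composite $k$. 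So your assembly does \emph{not} ``yield the summand of Theorem~\ref{thm2}'' as claimed. The expressions can be reconciled, but only by an argument you do not make: right-composition with $x\mapsto x^q$ permutes the $l$ Frobenius cosets of $\GL(k/l,q^l)$ inside the semilinear group cyclically and stabilizes each larger semilinear group, so the union is distributed evenly over the cosets; this rewrites your count as $l\bigl(|\GL(k/l,q^l)|-\bigl|\bigcup_m \GL(k/m,q^m)\rtimes\Gal(\F_{q^m}/\F_{q^l})\bigr|\bigr)$ --- the paper's shape, but with Galois groups taken over $\F_{q^l}$ rather than over $\F_q$. Either you add this coset argument (and the reinterpreted union) to reach the paper's form, or you must concede that what you proved is a (correct) formula that is not literally the stated $L$. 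Asserting the match without verifying the constant is the one real defect, and it lands precisely where the paper's own proof is also least explicit.
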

\begin{proof}
By Theorem \ref{mainthm}, we only need to compute the size of $\cup_{i,j=0}^{N-1}S_{ij}$. In the following, we determine the size of $\cup_{i,j=0}^{N-1}S_{ij}$ by the inclusion-exclusion principle
\begin{equation}\label{eq2}
|\cup_{i,j=0}^{N-1}S_{ij}|=\sum_{i,j=0}^{N-1}|S_{ij}|-\frac{1}{2!}\sum_{\substack{i_{0}\ne i_{1}\\   j_{0}\ne j_{1}}}|S_{i_{0}j_{0}}\cap S_{i_{1}j_{1}}|+\frac{1}{3!}\sum_{\substack{i_{0}, i_{1}, i_{2}\text{ are pairwise distinct}\\   j_{0}, j_{1}, j_{2}\text{ are pairwise distinct}}}|S_{i_{0}j_{0}}\cap S_{i_{1}j_{1}}\cap S_{i_{2}j_{2}}|-\cdots.
\end{equation}

First, by Lemma~\ref{lemma2}, we have $\sum_{i,j=0}^{N-1}|S_{ij}|=N^{2}q^{k^{2}}|\GL(k,q)|^{2}$.

Next we consider $S_{i_{0}j_{0}}\cap S_{i_{1}j_{1}}$, where $i_{0}\ne i_{1},  j_{0}\ne j_{1}$. By change of basis, we may assume that $i_{0}=j_{0}=0$ and $i_{1}=j_{1}=1$. If $M\in S_{i_{0}j_{0}}\cap S_{i_{1}j_{1}}$, then it must be of the form
$\left(
   \begin{array}{cc}
     A & 0 \\
     0 & B \\
   \end{array}
 \right),
$
where $A,B\in \GL(k,q)$. Hence $|S_{i_{0}j_{0}}\cap S_{i_{1}j_{1}}|=|\GL(k,q)|^{2}$. Therefore
\begin{align*}
-\frac{1}{2!}\sum_{\substack{i_{0}\ne i_{1}\\   j_{0}\ne j_{1}}}|S_{i_{0}j_{0}}\cap S_{i_{1}j_{1}}|=-\frac{1}{2}(N(N-1))^{2}|\GL(k,q)|^{2}.
\end{align*}

Now we consider $S_{i_{0}j_{0}}\cap S_{i_{1}j_{1}}\cap S_{i_{2}j_{2}}$, where $i_{0}, i_{1}, i_{2}\text{ are pairwise distinct and } j_{0}, j_{1}, j_{2}\text{ are pairwise distinct}$. By change of basis, we may assume that $i_{0}=j_{0}=0$ and $i_{1}=j_{1}=1$. Moreover, there exists $a_{i},b_{i}\in\mathbb{F}_{q^{k}}^{*}$ with $i=0,1$ such that
	\begin{align*}
	\omega^{i_{2}}&=a_{0}+a_{1}\omega,\\
	\omega^{j_{2}}&=b_{0}+b_{1}\omega.
	\end{align*}
	
	For any $\varphi\in S_{00}\cap S_{11}\cap S_{i_{2}j_{2}}\subseteq \GL(2k,q)$ (here we consider the elements of $\GL(2k,q)$ as the linear maps from $\mathbb{F}_{q^{2k}}$ to $\mathbb{F}_{q^{2k}}$), we define $\varphi_i :\F_{q^k} \rightarrow \F_{q^k}$ by
    \begin{align*}
		\varphi(\omega^{i}x)=\omega^{i}\varphi_{i}(x), x\in\mathbb{F}_{q^{k}}, i=0,1.
	\end{align*}
	As $\varphi$ maps $V_i=\{\omega^i x : x\in \F_{q^k}\}$ to itself for $i=0,1$, $\varphi_0$ and $\varphi_1$ must be in $\GL(k,q)$.
	By calculation,
	\begin{equation}\label{eq:phi_1}
		\varphi(\omega^{i_{2}}x)=\varphi((a_{0}+a_{1}\omega)x)=\varphi_{0}(a_{0}x)+\omega\varphi_{1}(a_{1}x),
	\end{equation}
	for each $x\in \F_{q^k}$. As $\varphi(\omega^{i_{2}}x)\in V_{j_2}$,
	\begin{equation}\label{eq:phi_2}
		\varphi(\omega^{i_{2}}x) = \omega^{j_2} x' =(b_{0}+b_{1}\omega)x'=b_{0}x'+\omega b_{1}x',
	\end{equation}
	for some $x' \in \F_{q^k}$.

Note that $a_{i},b_{i}\in\mathbb{F}_{q^{k}}^{*}$, by Equations \eqref{eq:phi_1} and \eqref{eq:phi_2}, $\varphi_{1}(x)=\frac{b_{1}}{b_{0}}\varphi_{0}(\frac{a_{0}}{a_{1}}x)$, which means that $\varphi_0$ completely determines $\varphi_1$. Conversely, all $\varphi_i$'s defined in this way give rise to a map $\varphi$ from $V_{i_2}$ to $V_{j_2}$. Thus
\begin{align*}
\frac{1}{3!}\sum_{\substack{i_{0}, i_{1}, i_{2}\text{ are pairwise distinct}\\   j_{0}, j_{1}, j_{2}\text{ are pairwise distinct}}}|S_{i_{0}j_{0}}\cap S_{i_{1}j_{1}}\cap S_{i_{2}j_{2}}|=\frac{1}{3!}(N(N-1)(N-2))^{2}|\GL(k,q)|.
\end{align*}

For the fourth term of Equation~(\ref{eq2}), that is $S_{i_{0}j_{0}}\cap S_{i_{1}j_{1}}\cap S_{i_{2}j_{2}}\cap S_{i_{3}j_{3}}$, where $i_{0}, i_{1}, i_{2},i_{3}$ are pairwise distinct and $j_{0}, j_{1}, j_{2},j_{3}$ are pairwise distinct. Without loss of generality, we may assume that $i_{0}=j_{0}=0$, $i_{1}=j_{1}=1$ and $\omega^{i_{2}}=\omega^{j_{2}}=1+\omega$. Moreover, there exists $a_{i},b_{i}\in\mathbb{F}_{q^{k}}^{*}$ with $i=0,1$ such that
	\begin{align}
	\label{eq:omega_is}	\omega^{i_{3}}&=a_{0}+a_{1}\omega,\\
	\label{eq:omega_js}	\omega^{j_{3}}&=b_{0}+b_{1}\omega,
	\end{align}
and $\frac{a_{1}}{a_{0}},\frac{b_{1}}{b_{0}}\ne1$.

Let $\varphi\in S_{i_{0}j_{0}}\cap S_{i_{1}j_{1}}\cap S_{i_{2}j_{2}}\cap S_{i_{3}j_{3}}$. By a similar discussion as for Equations~\eqref{eq:phi_1} and \eqref{eq:phi_2} , we have
\begin{align}
\varphi_{1}(x)=\varphi_{0}(x),\\
\varphi_{1}(x)=u^{-1}\varphi_{0}(vx),
\end{align}
where $u=\frac{b_{0}}{b_{1}}$, $v=\frac{a_{0}}{a_{1}}$. Then we have
\begin{align}
u\varphi_{0}(x)=\varphi_{0}(vx)\text{ for all }x\in\mathbb{F}_{q^{k}}.
\end{align}

This condition gives a strong restriction on $u$ and $v$. Suppose that $l$ is a divisor of $k$. By looking at the $q$-polynomial associated with $\varphi_{0}$, it is not difficult to see that if $\varphi_{0}\in\GL(\frac{k}{l},q^{l})\rtimes \Gal(\F_{q^l}/\F_q)$, then we may choose $v\in\mathbb{F}_{q^{l}}\setminus\{0,1\}$ and $u=v^{q^{r}}$, which means $\varphi_{0}(x)=x^{q^{r}}\circ \psi_{0}(x)$ for some $\psi_{0}\in \GL(\frac{k}{l},q^{l})$. Hence
\begin{align*}
&-\frac{1}{4!}\sum_{\substack{i_{0}, i_{1}, i_{2},i_{3}\text{ are pairwise distinct}\\   j_{0}, j_{1}, j_{2},j_{3}\text{ are pairwise distinct}}}|S_{i_{0}j_{0}}\cap S_{i_{1}j_{1}}\cap S_{i_{2}j_{2}}\cap S_{i_{3}j_{3}}|\\
=&-\frac{1}{4!}(N(N-1)(N-2))^{2}\sum_{l|k}\left(\left|\GL(k/l,q^{l})\right|-\left| \bigcup_{l\mid m, l\neq m} \GL(k/m, q^m)\rtimes \mathrm{Gal}(\F_{q^m}/\F_q)\right|\right)l(q^{l}-2).
\end{align*}

To extend the previous calculation to the rest terms of Equation~(\ref{eq2}), we have
\begin{align*}
|\cup_{i,j=1}^{N}S_{ij}|=&N^{2}q^{k^{2}}|\GL(k,q)|^{2}-\frac{1}{2}(N(N-1))^{2}|\GL(k,q)|^{2}+\frac{1}{3!}(N(N-1)(N-2))^{2}|\GL(k,q)|+\\
&\sum_{i=4}^{N}\frac{(-1)^{i+1}}{i!}(N(N-1)(N-2))^{2}\sum_{l|k}\left(\left|\GL(k/l,q^{l})\right|-\left| \bigcup_{l\mid m, l\neq m} \GL(k/m, q^m)\rtimes \mathrm{Gal}(\F_{q^m}/\F_q)\right|\right)l[q^{l}-2]_{i-3},
\end{align*}
where $[n]_{j}=n(n-1)\cdots(n-j+1)$.
\end{proof}

For $n\ge 3k$, it appears more difficult to give an exact formula for $|\cup_{i,j=1}^{N}S_{ij}|$. Hence we give the following lower bound for the general case.
\begin{theorem}\label{thm3}
Let $q$ be a prime power and assume that $k|n$. Then,
\begin{align*}
P(n,k,q)\ge\frac{|\GL(n,q)|}{(\frac{q^{n}-1}{q^{k}-1})^{2}q^{k(n-k)}|\GL(k,q)||\GL(n-k,q)|-\frac{(q^{n}-1)^{2}}{q^{k}-1}+(q^{n}-1)}.
\end{align*}
\end{theorem}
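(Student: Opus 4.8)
The plan is to apply Theorem~\ref{mainthm} directly and then bound $|\cup_{i,j=0}^{N-1}S_{ij}|$ from above, since a larger denominator yields a valid (if weaker) lower bound. By Theorem~\ref{mainthm} we already know that $P(n,k,q)\ge\frac{|\GL(n,q)|}{|\cup_{i,j=0}^{N-1}S_{ij}|}$, so everything reduces to showing that
\begin{align*}
\left|\bigcup_{i,j=0}^{N-1}S_{ij}\right|\le N^{2}q^{k(n-k)}|\GL(k,q)||\GL(n-k,q)|-\frac{(q^{n}-1)^{2}}{q^{k}-1}+(q^{n}-1),
\end{align*}
where I recall $N=\frac{q^n-1}{q^k-1}$, so that $N^2=\frac{(q^n-1)^2}{(q^k-1)^2}$ and the first term matches the leading term of the denominator in the statement. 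The strategy is to keep only the first two terms of the inclusion-exclusion expansion~(\ref{eq2}), estimate the overcounting they introduce, and discard the remaining terms in a controlled way.

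First I would apply Lemma~\ref{lemma2} to get $\sum_{i,j=0}^{N-1}|S_{ij}|=N^{2}q^{k(n-k)}|\GL(k,q)||\GL(n-k,q)|$ exactly, which is the leading (union-bound) term. Second, I would subtract a correction coming from pairwise intersections, but rather than evaluating all of them as in the proof of Theorem~\ref{thm2}, I would isolate a single clean family whose contribution I can compute or lower-bound. The natural choice is to fix a pair of indices, say $i=j=0$, and look at $S_{00}$ together with the members $S_{0j}$ and $S_{i0}$ that share a row or column index with it: these force shared structure and hence repeated elements across the union that a plain union bound double-counts. The identity element $I$ lies in every $S_{ii}$, contributing the $+(q^n-1)$ term, and the diagonal family $\{S_{ii}\}$ is exactly what produces the $-\frac{(q^n-1)^2}{q^k-1}$ correction after accounting for how many times such stabilizer-type elements are counted.

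Concretely, I would argue that the elements $M$ with $M(V_i)=V_i$ for more than one index $i$ are severely overcounted by $\sum_{i,j}|S_{ij}|$, and that a lower bound on this overcount of the form $\frac{(q^{n}-1)^{2}}{q^{k}-1}-(q^{n}-1)$ can be extracted by counting, for each fixed $M$ in an appropriate stabilizer subgroup, the number of pairs $(i,j)$ with $M\in S_{ij}$ and using that the diagonal pairs $(i,i)$ alone give at least $N$ such incidences. Since inclusion-exclusion alternates, truncating after the subtraction of this lower-bounded pairwise correction gives a genuine upper bound on $|\cup S_{ij}|$ provided the discarded tail is handled by a parity/monotonicity argument (the Bonferroni inequalities), so I would invoke the Bonferroni inequality $|\cup A_t|\le \sum|A_t|-\sum_{s<t}|A_s\cap A_t|+\sum|A_s\cap A_t\cap A_u|$ after verifying the sign, or more simply bound $|\cup S_{ij}|\le \sum|S_{ij}|-(\text{guaranteed repetitions})$ via a direct double-counting of incidences.

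The main obstacle will be making the subtracted term a rigorous \emph{upper} bound on the union rather than merely the second term of an alternating series whose tail could have either sign. The cleanest route, which I would pursue, is to avoid inclusion-exclusion truncation entirely and instead bound $|\cup_{i,j}S_{ij}|$ directly: write $|\cup_{i,j}S_{ij}|=\sum_{M}\mathbf{1}[M\in\cup S_{ij}]$ and compare it with $\sum_{i,j}|S_{ij}|=\sum_{M}c(M)$, where $c(M)=|\{(i,j):M\in S_{ij}\}|$ is the number of pairs whose spread condition $M$ satisfies. Then $\sum|S_{ij}|-|\cup S_{ij}|=\sum_{M}(c(M)-1)\mathbf{1}[c(M)\ge 1]$, and it suffices to show this \emph{excess} is at least $\frac{(q^{n}-1)^{2}}{q^{k}-1}-(q^{n}-1)$. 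The hard part is thus lower-bounding $\sum_M (c(M)-1)$ over all $M$ lying in some $S_{ij}$; I expect this to follow by restricting attention to field-linear maps (multiplications by elements of $\F_{q^n}^*$ acting on the spread $\{V_i\}$), for which $c(M)$ is large and explicitly computable, giving exactly the claimed correction and completing the bound.
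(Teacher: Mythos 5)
Your final route is correct and is essentially the paper's own proof: the paper likewise takes the union bound $\sum_{i,j}|S_{ij}|$ from Lemma~\ref{lemma2} and then subtracts the overcount coming from the multiplication maps $M_i(x)=\omega^i x$, each of which lies in exactly $N=\frac{q^n-1}{q^k-1}$ of the sets $S_{ij}$ but is counted once in the union, yielding the correction $\frac{(q^n-1)^2}{q^k-1}-(q^n-1)$. The Bonferroni/truncated inclusion-exclusion detour you first consider is indeed a dead end (truncating after the pairwise term bounds the union from below, not above), but you correctly abandon it for the direct double-counting of incidences, which is exactly what the paper does.
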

\begin{proof}
By Lemma~\ref{lemma2}, we have
\begin{align}
|\cup_{i,j=0}^{N-1}S_{ij}|\le\cup_{i,j=0}^{N-1}|S_{ij}|=(\frac{q^{n}-1}{q^{k}-1})^{2}q^{k(n-k)}|\GL(k,q)||\GL(n-k,q)|.\label{eq1}
\end{align}

Let $M_{i}\in \GL(n,q)$ such that $M_{i}(x)=\omega^{i}x$ for $i=0,1,\dots,q^{n}-2$. Then $M_{i}(V_{j})=V_{(i+j)\pmod{\frac{q^{n}-1}{q^{k}-1}}}$ for $0\le i\le q^{n}-2$ and $0\le j\le\frac{q^{n}-1}{q^{k}-1}-1$. Hence $M_{i}$ has been counted $\frac{q^{n}-1}{q^{k}-1}$ times in Equation (\ref{eq1}). Consequently
\begin{align*}
|\cup_{i,j=0}^{N-1}S_{ij}|\le(\frac{q^{n}-1}{q^{k}-1})^{2}q^{k(n-k)}|\GL(k,q)||\GL(n-k,q)|-\frac{(q^{n}-1)^{2}}{q^{k}-1}+(q^{n}-1).
\end{align*}

By Theorem~\ref{mainthm}, we have
\begin{align*}
P(n,k,q)&\ge\frac{|V(G)|}{|\cup_{i,j=0}^{N-1}S_{ij}|}\\
        &\ge\frac{|\GL(n,q)|}{(\frac{q^{n}-1}{q^{k}-1})^{2}q^{k(n-k)}|\GL(k,q)||\GL(n-k,q)|-\frac{(q^{n}-1)^{2}}{q^{k}-1}+(q^{n}-1)}. \qedhere
\end{align*}
\end{proof}

Since the formulas in both Theorem \ref{thm2} and Theorem \ref{thm3} are complicated, we give the following corollary.
\begin{corollary}\label{coro}
Let $q$ be a prime power and assume that $k|n$. Then
\begin{align*}
P(n,k,q)>\frac{q^{k}-1}{q^{n}-1}{n-1 \brack k-1}_{q}.
\end{align*}
\end{corollary}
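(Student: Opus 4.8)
The plan is to deduce Corollary~\ref{coro} directly from the general lower bound in Theorem~\ref{thm3}, by simplifying the denominator of that bound into a recognizable closed form and then discarding a provably negative correction term to gain the strict inequality. Throughout I write $N=\frac{q^{n}-1}{q^{k}-1}$, as in Theorem~\ref{thm2}.

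First I would record the orbit-stabilizer identity for the action of $\GL(n,q)$ on $k$-dimensional subspaces. Taking $U=V$ in Lemma~\ref{lemma2} shows that the stabilizer of any fixed $k$-subspace has order $q^{k(n-k)}|\GL(k,q)||\GL(n-k,q)|$; since $\GL(n,q)$ acts transitively on the ${n \brack k}_{q}$ subspaces of dimension $k$, this yields
\begin{align*}
{n \brack k}_{q}\cdot q^{k(n-k)}|\GL(k,q)||\GL(n-k,q)|=|\GL(n,q)|,
\end{align*}
equivalently $q^{k(n-k)}|\GL(k,q)||\GL(n-k,q)|=|\GL(n,q)|/{n \brack k}_{q}$. (One can also see this directly by summing the count in Lemma~\ref{lemma2} over all target subspaces $V$, since each $M$ sends $U$ to exactly one image.)

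Next I would invoke the Gaussian-coefficient identity ${n \brack k}_{q}=\frac{q^{n}-1}{q^{k}-1}{n-1 \brack k-1}_{q}=N\,{n-1 \brack k-1}_{q}$, which is immediate from the factorial-style formula for ${n \brack k}_{q}$ given in the introduction. Combining this with the previous display gives
\begin{align*}
\frac{|\GL(n,q)|}{N^{2}\,q^{k(n-k)}|\GL(k,q)||\GL(n-k,q)|}=\frac{{n \brack k}_{q}}{N^{2}}=\frac{{n-1 \brack k-1}_{q}}{N}=\frac{q^{k}-1}{q^{n}-1}{n-1 \brack k-1}_{q}.
\end{align*}
In other words, the quantity in Corollary~\ref{coro} is exactly the value of the Theorem~\ref{thm3} bound when the two subtracted correction terms are dropped from its denominator.

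Finally I would verify that those corrections genuinely shrink the denominator, which upgrades the inequality to a strict one. The correction equals $-\frac{(q^{n}-1)^{2}}{q^{k}-1}+(q^{n}-1)=-(q^{n}-1)(N-1)$, which is strictly negative since $N>1$ whenever $k<n$. Hence the denominator appearing in Theorem~\ref{thm3} is strictly smaller than $N^{2}\,q^{k(n-k)}|\GL(k,q)||\GL(n-k,q)|$, so the lower bound of Theorem~\ref{thm3} strictly exceeds the simplified value computed above, giving $P(n,k,q)>\frac{q^{k}-1}{q^{n}-1}{n-1 \brack k-1}_{q}$. I do not expect any real obstacle: the argument reduces to the two algebraic identities together with the sign of a single term, so the only care needed is in checking the orbit-stabilizer count and the Gaussian identity correctly.
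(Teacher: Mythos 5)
Your proposal is correct and follows essentially the same route as the paper: both deduce the corollary from Theorem~\ref{thm3} by discarding the strictly negative correction term $-\frac{(q^{n}-1)^{2}}{q^{k}-1}+(q^{n}-1)$ in the denominator and then identifying $\frac{|\GL(n,q)|}{N^{2}q^{k(n-k)}|\GL(k,q)||\GL(n-k,q)|}$ with $\frac{q^{k}-1}{q^{n}-1}{n-1 \brack k-1}_{q}$. The only difference is that you spell out the orbit-stabilizer and Gaussian-coefficient identities behind that final equality, which the paper leaves implicit.
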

\begin{proof}
As $|\GL(n,q)|=(q^n-1)(q^n-q)\cdots(q^n-q^{n-1})$, we get from Theorem \ref{thm3} that
\begin{align*}
P(n,k,q)&\ge\frac{|\GL(n,q)|}{(\frac{q^{n}-1}{q^{k}-1})^{2}q^{k(n-k)}|\GL(k,q)||\GL(n-k,q)|-\frac{(q^{n}-1)^{2}}{q^{k}-1}+(q^{n}-1)}\\
        &>\frac{|\GL(n,q)|}{(\frac{q^{n}-1}{q^{k}-1})^{2}q^{k(n-k)}|\GL(k,q)||\GL(n-k,q)|}\\
        &=\frac{q^{k}-1}{q^{n}-1}{n-1 \brack k-1}_{q}. \qedhere
\end{align*}
\end{proof}
\begin{remark}
The only difference between the lower bound in Corollary~\ref{coro} and the upper bound for partial $k$-parallelisms is the factor $\frac{q^{k}-1}{q^{n}-1}$. It is clear that, our lower bounds are much larger than those given in \cite{B90} and \cite{E15}.
\end{remark}

\section*{Acknowledgment}
The authors express their gratitude to the anonymous reviewers for their detailed and constructive comments which have been very helpful to the improvement of the presentation of this paper. Tao Zhang is supported by the National Natural Science Foundation of China under Grant No.\ 11801109. Yue Zhou is supported by the National Natural Science Foundation of China under Grant No.\ 11771451 and Natural Science Foundation of Hunan Province under Grant No.\ 2019JJ30030.


\begin{thebibliography}{10}
	
	\bibitem{AS16}
	N.~Alon and J.~H. Spencer.
	\newblock {\em The probabilistic method}.
	\newblock Wiley Series in Discrete Mathematics and Optimization. John Wiley \&
	Sons, Inc., Hoboken, NJ, fourth edition, 2016.
	
	\bibitem{A54}
	J.~Andr\'e.
	\newblock {\"U}ber nicht-{D}esarguessche {E}benen mit transitiver
	{T}ranslationsgruppe.
	\newblock {\em Math. Z.}, 60:156--186, 1954.
	
	\bibitem{BPV13}
	C.~Bachoc, A.~Passuello, and F.~Vallentin.
	\newblock Bounds for projective codes from semidefinite programming.
	\newblock {\em Adv. Math. Commun.}, 7(2):127--145, 2013.
	
	\bibitem{B76}
	R.~D. Baker.
	\newblock Partitioning the planes of {${\rm AG}_{2m}(2)$} into {$2$}-designs.
	\newblock {\em Discrete Math.}, 15(3):205--211, 1976.
	
	\bibitem{BVW83}
	R.~D. Baker, J.~H. van Lint, and R.~M. Wilson.
	\newblock On the {P}reparata and {G}oethals codes.
	\newblock {\em IEEE Trans. Inform. Theory}, 29(3):342--345, 1983.
	
	\bibitem{B74}
	A.~Beutelspacher.
	\newblock On parallelisms in finite projective spaces.
	\newblock {\em Geom. Dedicata}, 3:35--40, 1974.
	
	\bibitem{B90}
	A.~Beutelspacher.
	\newblock Partial parallelisms in finite projective spaces.
	\newblock {\em Geom. Dedicata}, 36(2-3):273--278, 1990.
	
	\bibitem{BEOVW16}
	M.~Braun, T.~Etzion, P.~R.~J. \"Osterg{\aa}rd, A.~Vardy, and A.~Wassermann.
	\newblock Existence of {$q$}-analogs of {S}teiner systems.
	\newblock {\em Forum Math. Pi}, 4:e7, 14, 2016.
	
	\bibitem{BKKL17}
	M.~Braun, M.~Kiermaier, A.~Kohnert, and R.~Laue.
	\newblock Large sets of subspace designs.
	\newblock {\em J. Combin. Theory Ser. A}, 147:155--185, 2017.
	
	\bibitem{E15}
	T.~Etzion.
	\newblock Partial {$k$}-parallelisms in finite projective spaces.
	\newblock {\em J. Combin. Des.}, 23(3):101--114, 2015.
	
	\bibitem{ES09}
	T.~Etzion and N.~Silberstein.
	\newblock Error-correcting codes in projective spaces via rank-metric codes and
	{F}errers diagrams.
	\newblock {\em IEEE Trans. Inform. Theory}, 55(7):2909--2919, 2009.
	
	\bibitem{ES16}
	T.~Etzion and L.~Storme.
	\newblock Galois geometries and coding theory.
	\newblock {\em Des. Codes Cryptogr.}, 78(1):311--350, 2016.
	
	\bibitem{EV12}
	T.~Etzion and A.~Vardy.
	\newblock Automorphisms of codes in the grassmann scheme.
	\newblock arXiv: 1210.5724.
	
	\bibitem{EV11}
	T.~Etzion and A.~Vardy.
	\newblock Error-correcting codes in projective space.
	\newblock {\em IEEE Trans. Inform. Theory}, 57(2):1165--1173, 2011.
	
	\bibitem{FLV14}
	A.~Fazeli, S.~Lovett, and A.~Vardy.
	\newblock Nontrivial {$t$}-designs over finite fields exist for all {$t$}.
	\newblock {\em J. Combin. Theory Ser. A}, 127:149--160, 2014.
	
	\bibitem{KK08}
	R.~K\"otter and F.~R. Kschischang.
	\newblock Coding for errors and erasures in random network coding.
	\newblock {\em IEEE Trans. Inform. Theory}, 54(8):3579--3591, 2008.
	
	\bibitem{NS17}
	E.~L. N{\u a}stase and P.~A. Sissokho.
	\newblock The maximum size of a partial spread in a finite projective space.
	\newblock {\em J. Combin. Theory Ser. A}, 152:353--362, 2017.
	
	\bibitem{S02}
	J.~F. Sarmiento.
	\newblock On point-cyclic resolutions of the 2-(63,7,15) design associated with
	{PG}(5,2).
	\newblock {\em Graphs Combin.}, 18(3):621--632, 2002.
	
	\bibitem{SE11}
	N.~Silberstein and T.~Etzion.
	\newblock Enumerative coding for {G}rassmannian space.
	\newblock {\em IEEE Trans. Inform. Theory}, 57(1):365--374, 2011.
	
	\bibitem{SE2011}
	N.~Silberstein and T.~Etzion.
	\newblock Large constant dimension codes and lexicodes.
	\newblock {\em Adv. Math. Commun.}, 5(2):177--189, 2011.
	
	\bibitem{SK09}
	D.~Silva and F.~R. Kschischang.
	\newblock On metrics for error correction in network coding.
	\newblock {\em IEEE Trans. Inform. Theory}, 55(12):5479--5490, 2009.
	
	\bibitem{SKK08}
	D.~Silva, F.~R. Kschischang, and R.~K\"otter.
	\newblock A rank-metric approach to error control in random network coding.
	\newblock {\em IEEE Trans. Inform. Theory}, 54(9):3951--3967, 2008.
	
	\bibitem{S10}
	V.~Skachek.
	\newblock Recursive code construction for random networks.
	\newblock {\em IEEE Trans. Inform. Theory}, 56(3):1378--1382, 2010.
	
\end{thebibliography}
\end{document}